\DeclareMathOperator{\nm}{nm}
\DeclareMathOperator{\tr}{tr}
\DeclareMathOperator{\res}{res}
\DeclareMathOperator{\conj}{conj}
\DeclareMathOperator{\Spec}{Spec}
\newcommand{\uA}{\underline{A}}
\newcommand{\Z}{\mathbb{Z}}
\newcommand{\Tamb}{\mathsf{Tamb}}
\mathchardef\mhyphen="2D
\newtheorem{thm}{Theorem}
\newtheorem{lem}{Lemma}
\newtheorem{cor}{Corollary}
\newtheorem{prop}{Proposition}
\newtheorem{question}{Question}
\theoremstyle{definition}
\newtheorem{example}{Example}
\title{Inverting Integers in Tambara Functors}
\author[B.\ Spitz]{Ben Spitz}
\address{Department of Mathematics, Indiana University, Bloomington, IN, USA}
\email{bespitz@iu.edu}
\begin{document}

\begin{abstract}
    Let $G$ be a finite group, and $k$ an integer. In this note, we show that for any $G$-Tambara functor $T$ and any subgroups $H_1, H_2 \leq G$, $k$ is a unit in $T(G/H_1)$ if and only if $k$ is a unit in $T(G/H_2)$. In other words, one may speak unambiguously of the localization $T[1/k]$.  As a consequence, the norm functors $N_H^G$ commute with inverting $k$.
\end{abstract}

\maketitle

\section{Introduction}\label{sec:introduction}

Tambara functors are equivariant generalizations of commutative rings, in the following sense: for each finite group $G$, there is a notion of \emph{$G$-Tambara functors}, and this notion coincides with that of commutative rings when $G$ is the trivial group.

The data of a $G$-Tambara functor $T$ consists of a collection of commutative rings $T(G/H)$, one for each subgroup $H$ of $G$, together with:
\begin{enumerate}
    \item for each inclusion $H \leq K$ of subgroups of $G$:
    \begin{enumerate}
        \item a ring homomorphism $\res_H^K : T(G/K) \to T(G/H)$,
        \item a homomorphism of additive groups $\tr_H^K : T(G/H) \to T(G/K)$, and
        \item a homomorphism of multiplicative monoids $\nm_H^K : T(G/H) \to T(G/K)$;
    \end{enumerate}
    \item for each element $g \in G$ and each subgroup $H \leq G$, an isomorphism of rings $\conj_{g,H} : T(G/H) \to T(G/gHg^{-1})$.
\end{enumerate}
These data must satisfy a long list of axioms -- we refer the reader to \cite{Tambara} for the original definition of Tambara functors, and \cite{Strickland} for a thorough introduction to the theory of Tambara functors.

Fixing a finite group $G$, the category of $G$-Tambara functors is a multi-sorted variety in the sense of universal algebra\footnote{Indeed, the category of $G$-Tambara functors is best defined as the category of algebras for a certain multi-sorted Lawvere theory. We refer the reader to \cite[Chapter 3]{Adamek-Rosicky} for an introduction to the theory of multi-sorted varieties.} \cite[Proof of Proposition 4.3]{AlgClosed}. In particular, one can perform all sorts of universal constructions: quotients by ideals, localizations by inverting elements, direct products, etc. are well-defined notions in the category of $G$-Tambara functors.

This note addresses the following question, which we answer in the affirmative:
\begin{question}\label{question}
    Let $T$ be a $G$-Tambara functor and $k$ an integer. One may either:
    \begin{enumerate}
        \item localize $T$ by inverting the element $k$ in the ring $T(G/e)$, or
        \item localize $T$ by inverting the element $k$ in the ring $T(G/G)$.
    \end{enumerate}
    Do these produce the same result, up to isomorphism?
\end{question}

There is an a priori relationship between these two constructions: inverting $k \in T(G/G)$ is a ``more forceful'' operation than inverting $k \in T(G/e)$, similar to how inverting the element $6$ in the ring $\Z$ is more forceful than inverting $2$ (i.e. $2$ is a unit in $\Z[1/6]$). Consequently, there is a natural map from the localization (1) to the localization (2), and the question more precisely asks if this natural map is an isomorphism. We will show that this is the case via a simple argument in commutative algebra (\Cref{main theorem}). For any specific choice of $G$, $T$, and $k$, one may prove this directly without any input from commutative algebra. However, the necessary argument will depend on the structure of $G$ and the prime factorization of $k$, as demonstrated in \Cref{example:by hand for G=C_2}. The argument we give has the advantage of being completely uniform in $G$, $T$, and $k$.

A related but orthogonal question is addressed by Hill and Hopkins~\cite{HillHopkins}, who studied the behavior of genuine $G$-spectra under localization by inverting an integer. Just as $E_\infty$ ring spectra are structures in the stable homotopy category which yield commutative rings after applying $\pi_0$, there are structures known as \emph{$G{\mhyphen}E_\infty$ ring spectra} in the $G$-equivariant stable homotopy category which yield Tambara functors after applying $\pi_0$. The work of Hill and Hopkins shows that one may perform such a localization of $G{\mhyphen}E_\infty$ ring spectra either by inverting an integer $k$ as an element of the $G{\mhyphen}E_\infty$-ring $R$ or by inverting $k$ as an endomorphism of $R$ (viewed as a module over itself). Fundamentally, their work is concerned with the particular construction of the localization (i.e., identifying exactly which elements must be inverted by the localization map $R \to R[k^{-1}]$, or in other words asking how one may concretely describe the localization $R[k^{-1}]$).

At the level of Tambara functors, this was also addressed also by Nakaoka~\cite{Nakaoka}, who gave a concrete construction of localizations of Tambara functors. This is often extremely useful in particular computations; however, abstractly, one need not consider any particular construction -- as mentioned above, it is purely formal that $G$-Tambara functors admit universal constructions such as localization. It is from this abstract point of view that we approach the question of interest.

Recent progress in the theory of equivariant algebra (e.g. the computation of prime-ideal spectra~\cite{GhostsI,GhostsII,WisdomStratification} and the classification of the equivariant analogues of algebraically closed fields~\cite{AlgClosed}) has been enabled by the development of a basic theory of commutative algebra for Tambara functors~\cite{NakaokaPrimes,Nakaoka,Sun,Etale,WisdomEtale,CpnTambaraFields}. Just as ordinary commutative algebra is an indispensable tool for computations in ordinary stable homotopy theory (whenever one studies $E_\infty$ rings), the commutative algebra of Tamabra functors has applications to equivariant stable homotopy theory (whenever one studies $G{\mhyphen}E_\infty$ rings). The behavior of localizations of Tambara functors is of central importance to this theory, and plays a crucial role in some existing computations~\cite{HillMehrleQuigley,Bohme}. It is with an eye towards this program that this note is written.

\subsection*{Acknowledgments}

The author thanks Ayelet Lindenstrauss for prompting this line of inquiry and David Chan for helpful conversations. The author also thanks an anonymous referee for valuable comments and corrections.

\section{Primer on Tambara Functors}

In this section we review the essential information about the category of Tambara functors which is needed for the proof of the main theorem.

Fix a finite group $G$. There is a category $\Tamb_G$ of \emph{$G$-Tambara functors}, whose objects are $G$-Tambara functors (as described in \Cref{sec:introduction}). A morphism $f : T \to T'$ of $G$-Tambara functors is a collection $\{f_H : T(G/H) \to T'(G/H)\}_{H \leq G}$ of ring homomorphisms which commute with the maps $\res_H^K$, $\tr_H^K$, $\nm_H^K$, and $\conj_{g,H}$ individually. For example, a morphism $f : T \to T'$ of $G$-Tambara functors yields in particular a commutative square
\[\begin{tikzcd}[ampersand replacement=\&]
	{T(G/G)} \& {T'(G/G)} \\
	{T(G/e)} \& {T'(G/e)}
	\arrow["{f_G}", from=1-1, to=1-2]
	\arrow["{\res^G_e}"', from=1-1, to=2-1]
	\arrow["{\res^G_e}", from=1-2, to=2-2]
	\arrow["{f_e}"', from=2-1, to=2-2]
\end{tikzcd}\]
in the category of commutative rings.

\subsection{The Burnside Tambara Functor}

The category of $G$-Tambara functors has an initial object, called the \emph{Burnside Tambara functor}, denoted $\uA$. For what follows, we need only record the following facts about the Burnside Tambara functor:

\begin{enumerate}
    \item $\uA(G/G)$ is canonically isomorphic to the \emph{Burnside ring} $A(G)$ of $G$ (the Grothendieck ring of the category of finite $G$-sets\footnote{By $G$-set we will always mean \emph{left} $G$-set.}). That is, the elements of $A(G)$ are formal differences of isomorphism classes of finite $G$-sets, and the addition and multiplication operations of $A(G)$ are determined by $[X] + [Y] := [X \amalg Y]$ and $[X] \cdot [Y] = [X \times Y]$ for all finite $G$-sets $X, Y$.
    \item $\uA(G/e)$ is canonically isomorphic to the ring of integers $\Z$.
    \item The function $\nm_e^G : \uA(G/e) \cong \Z \to A(G) \cong \uA(G/G)$ sends each natural number $k$ to the isomorphism class of the $G$-set of \emph{all} functions $G \to \{1, \dots, k\}$. The action of $G$ on this set of functions is by precomposition with the right action of $G$ on itself by multiplication, i.e.\ $(g \cdot f)(g') = f(g'g)$.
\end{enumerate}

\subsection{Localization}

Given any $G$-Tambara functor $T$, any subgroup $H \leq G$, and any element $x \in T(G/H)$, there exists an initial Tambara functor $T'$ receiving a map $\phi$ from $T$ such that $\phi_H(x)$ is a unit in $T'(G/H)$. This initial Tambara functor is called the \emph{localization} of $T$ by $x$, and is denoted $T[1/x]$. An explicit construction of $T[1/x]$ is given by Nakaoka~\cite{Nakaoka}.

We note that these localizations may exhibit behavior which is unexpected to those familiar with commutative algebra. For example, there can exist elements $x \in T(G/H)$ which are not nilpotent but yield $T[1/x] = 0$ (see \Cref{example:zero localization from non-nilpotent element}). Nonetheless, these localizations are well-defined, i.e. there always exists an object (unique up to unique isomorphism) with the above universal property.

This note is concerned with the localizations of Tambara functors $T$ by integers $k$. Since each ring $T(G/H)$ contains a (unique) homomorphic image of $\Z$, we must take care (a priori) to describe at which level a localization of $T$ by an integer $k$ is taking place. For a subgroup $H$ of $G$, we will write $T[1/k_H]$ to denote the localization of $T$ by the element $k \in T(G/H)$.

For any inclusion $H \leq K$ of subgroups of $G$, the structure map $\res^K_H$ in a $G$-Tambara functor is a ring homomorphism; therefore $\res^K_H$ must act as the identity on integers and must send units to units. Since $k \in T[1/k_K](G/K)^\times$ by definition, we also have $k \in T[1/k_K](G/H)^\times$. Now the structure morphism $\varphi : T \to T[1/k_K]$ has the property that $\varphi_H(k) = k \in T[1/k_K](G/H)^\times$, so by the universal property of localization we obtain a canonical morphism $T[1/k_H] \to T[1/k_K]$. In \Cref{sec:main result} we will show that this is an isomorphism.

\section{The Ghost Map}

For each subgroup $H \leq G$, there is a ring homomorphism $\chi^H : A(G) \to \Z$ determined by $[X] \mapsto \#X^H$, i.e. each $G$-set is sent to its number of $H$-fixed points. Taking all of these homomorphisms together yields the \emph{ghost map}\footnote{Also sometimes known as the \emph{mark(s) homomorphism}}
\[\chi : A(G) \to \prod_{H \leq G} \Z.\]
A key feature of the Burnside ring is that the ghost map $\chi$ is injective (a fact most likely first observed by W.\ Burnside himself). This map was used to great effect by Dress~\cite{Dress}, who further observed that, since $\prod_{H \leq G} \Z$ is finitely generated as an abelian group, $\chi$ is an integral extension of rings\footnote{To be precise, Dress considers a slightly different homomorphism (taking only the maps $\chi^H$ as $H$ ranges over a set of representatives of the conjugacy classes of subgroups of $G$). This makes no important difference to the discussion in this section.}. Therefore,
\[\chi^* : \coprod_{H \leq G} \Spec \Z \cong \Spec \left(\prod_{H \leq G} \Z\right) \longrightarrow \Spec A(G)\]
is surjective. This immediately yields a description of (the underlying set of) $\Spec A(G)$.

\begin{prop}[Dress]\label{prop:Dress}
    Let $\mathfrak{p}$ be a prime ideal of $A(G)$. Then there exists a subgroup $H \leq G$ and a prime ideal $\mathfrak{q} \in \Spec \Z$ such that
    \[\mathfrak{p} = \ker(A(G) \xrightarrow{\chi^H} \Z \to \Z/\mathfrak{q}).\]
\end{prop}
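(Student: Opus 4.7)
Since the discussion preceding the statement already establishes that $\chi : A(G) \to \prod_{H \leq G} \Z$ is an injective integral extension of rings, the plan is to convert the resulting surjectivity of $\chi^*$ into the explicit description of primes demanded by the proposition. What remains is (i) to identify the primes of the product ring $\prod_{H \leq G} \Z$, and (ii) to verify that pulling a prime of the product back along $\chi$ recovers exactly the kernels in the statement.

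First I would invoke the standard fact that for a finite product of commutative rings, every prime ideal is the preimage of a prime in exactly one factor. Applied to $\prod_{H \leq G} \Z$, this says that every prime there has the form $\pi_H^{-1}(\mathfrak{q})$ for some subgroup $H \leq G$ and some prime $\mathfrak{q} \in \Spec \Z$, where $\pi_H$ denotes the $H$-th projection. Next, given a prime $\mathfrak{p} \in \Spec A(G)$, the lying-over theorem (which applies because $\chi$ is injective and integral) produces a prime $\mathfrak{P} \in \Spec \prod_{H \leq G} \Z$ with $\chi^{-1}(\mathfrak{P}) = \mathfrak{p}$. Writing $\mathfrak{P} = \pi_H^{-1}(\mathfrak{q})$ and noting that by the very definition of the ghost map we have $\pi_H \circ \chi = \chi^H$, I obtain
\[\mathfrak{p} = \chi^{-1}(\pi_H^{-1}(\mathfrak{q})) = (\chi^H)^{-1}(\mathfrak{q}) = \ker\bigl(A(G) \xrightarrow{\chi^H} \Z \to \Z/\mathfrak{q}\bigr),\]
which is exactly the claimed form.

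The hardest ingredients (injectivity of $\chi$, integrality of the extension, and the resulting surjectivity of $\chi^*$) are already in hand from the paragraph preceding the proposition, so there is essentially no further obstacle. The remainder of the argument is bookkeeping: identifying $\Spec$ of a finite product with the disjoint union of the factor spectra, and unwinding the identity $\chi^H = \pi_H \circ \chi$.
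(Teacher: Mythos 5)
Your proposal is correct and follows exactly the route the paper intends: the paper treats the proposition as an immediate consequence of the surjectivity of $\chi^*$ (which follows from injectivity and integrality of $\chi$), and your write-up simply fills in the routine steps — lying over, the description of primes in a finite product of rings, and the identity $\chi^H = \pi_H \circ \chi$. No discrepancies.
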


The ghost map also gives better understanding of the map $\nm_e^G$ in the Burnside Tambara functor.

\begin{lem}\label{lem:marks of norm}
    Let $k$ be a natural number, and let $H$ be a subgroup of $G$. Then
    \[\chi^H(\nm_e^G(k)) = k^{[G:H]}.\]
\end{lem}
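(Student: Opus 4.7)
The plan is a direct unpacking of the two definitions: $\nm_e^G(k)$ is identified with a specific $G$-set, and $\chi^H$ counts its $H$-fixed points, so the task reduces to counting $H$-fixed functions $G \to \{1, \dots, k\}$.

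First I would recall, from the description of the Burnside Tambara functor, that $\nm_e^G(k) \in A(G)$ is the isomorphism class of the $G$-set $X := \{1, \dots, k\}^G$ of all functions $G \to \{1, \dots, k\}$, with $G$-action given by $(g \cdot f)(g') = f(g'g)$. Applying $\chi^H$ to this class then gives $\chi^H(\nm_e^G(k)) = \#X^H$, so it suffices to compute this fixed-point count.

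Next I would unravel the condition for $f \in X$ to lie in $X^H$. By definition, $f$ is $H$-fixed iff for all $h \in H$ and all $g' \in G$, $f(g'h) = (h \cdot f)(g') = f(g')$. Equivalently, $f$ is constant on each left coset $g'H$, so $f$ factors uniquely as a function $\bar f : G/H \to \{1, \dots, k\}$ composed with the quotient map $G \to G/H$. This sets up a bijection between $X^H$ and the set of functions $G/H \to \{1, \dots, k\}$.

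Finally I would conclude by counting: the set of such functions has cardinality $k^{\#(G/H)} = k^{[G:H]}$, which is the claimed equality. I do not anticipate a real obstacle here, as the only step with any content is verifying that the $H$-fixed-point condition translates to ``constant on left cosets''; this is immediate from the specific sign convention $(g \cdot f)(g') = f(g'g)$ in the definition of the Burnside Tambara functor's norm, so the main thing to be careful about is keeping left and right actions straight.
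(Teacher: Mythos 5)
Your proposal is correct and follows essentially the same argument as the paper: identify $\nm_e^G(k)$ with the $G$-set of functions $G \to \{1,\dots,k\}$, observe that the $H$-fixed points are exactly the functions constant on left $H$-cosets, and count $k^{[G:H]}$ of them. The left/right bookkeeping you flag is handled exactly as in the paper's proof.
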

\begin{proof}
    We must count the number of $H$-fixed points in the $G$-set of all functions $G \to \{1, \dots, k\}$. By definition, an $H$-fixed point in this $G$-set is a function $f$ such that
    \[f(gh) = f(g)\]
    for all $g \in G$ and all $h \in H$. In other words, an $H$-fixed point is a function $G \to \{1, \dots, k\}$ which is constant on left $H$-cosets. There are $[G:H]$-many left cosets of $H$ in $G$, and therefore $k^{[G:H]}$-many such functions.
\end{proof}

\section{Main Results}\label{sec:main result}

We now answer \Cref{question} in the affirmative. To restate, we wish to show that for any $G$-Tambara functor $T$, the natural map $T[1/k_e] \to T[1/k_G]$ is an isomorphism, where $T[1/k_e]$ denotes the localization of $T$ by the element $k \in T(G/e)$ and $T[1/k_G]$ denotes the localization of $T$ by the element $k \in T(G/G)$. For $H$ any subgroup of $G$, the localization $T[1/k_H]$ is intermediate between $T[1/k_e]$ and $T[1/k_G]$ -- thus, it will follow that the localizations $T[1/k_H]$ are all canonically isomorphic (\Cref{cor:main}).

Consequently, once this is proved, there will be no need to use such cumbersome notation; one may simply write $T[1/k]$ without any fear of ambiguity. By the universal property of localization, it is equivalent to prove the following theorem.

\begin{thm}\label{main theorem}
    Let $G$ be a finite group, let $T$ be a $G$-Tambara functor, and let $k$ be an integer. The following are equivalent:
    \begin{enumerate}
        \item $k \in T(G/G)^\times$;
        \item $k \in T(G/H)^\times$ for all subgroups $H \leq G$;
        \item $k \in T(G/e)^\times$.
    \end{enumerate}
\end{thm}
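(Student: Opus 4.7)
The implications (1) $\Rightarrow$ (2) $\Rightarrow$ (3) are immediate: each restriction $\res^K_H$ is a ring homomorphism sending $k \mapsto k$, and ring homomorphisms preserve units. The substance of the theorem lies in (3) $\Rightarrow$ (1). I may assume $k \geq 1$: for $k < 0$, $-1$ is always a unit in any commutative ring; and for $k = 0$, the identities $\nm_e^G(0) = 0$ and $\nm_e^G(1) = 1$ in $\uA$ (hence in $T$) show that $T(G/e) = 0$ forces $T(G/G) = 0$.

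So assume $k \geq 1$ is a unit in $T(G/e)$. The first step is to note that, since $\nm_e^G$ is a homomorphism of multiplicative monoids, $\nm_e^G(k) \cdot \nm_e^G(k^{-1}) = \nm_e^G(1) = 1$ in $T(G/G)$, so $\nm_e^G(k) \in T(G/G)^\times$. It would therefore suffice to exhibit a relation of the form
\[ \nm_e^G(k)^m = k \cdot c \]
in the Burnside ring $A(G)$, for some $c \in A(G)$ and $m \geq 1$: the unique map $\uA \to T$ at level $G$ transports this relation into $T(G/G)$, and since the left side is a unit and a divisor of a unit is a unit, $k \in T(G/G)^\times$ as desired.

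The main obstacle is producing this relation, which amounts to showing $\sqrt{(k)} = \sqrt{(\nm_e^G(k))}$ as ideals of $A(G)$. By \Cref{prop:Dress}, every prime of $A(G)$ has the form $\mathfrak{p}(H, \mathfrak{q}) = \ker(A(G) \xrightarrow{\chi^H} \Z \to \Z/\mathfrak{q})$, so membership is detected by $\chi^H$. Using $\chi^H(k) = k$, the identity $\chi^H(\nm_e^G(k)) = k^{[G:H]}$ from \Cref{lem:marks of norm}, and the primality of $\mathfrak{q}$, the chain of equivalences
\[ k \in \mathfrak{p}(H,\mathfrak{q}) \iff k \in \mathfrak{q} \iff k^{[G:H]} \in \mathfrak{q} \iff \nm_e^G(k) \in \mathfrak{p}(H,\mathfrak{q}) \]
shows that $V(k) = V(\nm_e^G(k))$ in $\Spec A(G)$, yielding the radical equality and completing the proof.
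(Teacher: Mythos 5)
Your proof is correct and follows essentially the same route as the paper's: reduce to the Burnside ring via the initial map $\uA \to T$, note that $\nm_e^G(k)$ is automatically a unit in $T(G/G)$, and combine Dress's description of $\Spec A(G)$ with the computation $\chi^H(\nm_e^G(k)) = k^{[G:H]}$ to see that $k$ and $\nm_e^G(k)$ cut out the same primes. The only cosmetic difference is that you package this as the radical equality $\sqrt{(k)} = \sqrt{(\nm_e^G(k))}$ and extract an explicit relation $\nm_e^G(k)^m = kc$, whereas the paper phrases the same fact as $k$ being a unit in $A(G)[1/\nm_e^G(k)]$ and argues by contradiction with a maximal ideal.
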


We note that the proof below uses in an essential way the structure map $\nm_e^G$ of a $G$-Tambara functor. For Green functors (which differ only from Tambara functors in that they do not have the $\nm$ structure maps) the theorem is false (see \Cref{example:main theorem fails for Green functors}).

\begin{proof}
    The implication (1) $\implies$ (2) is trivial, since we have ring homomorphisms $\res_H^G : T(G/G) \to T(G/H)$ for all subgroups $H \leq G$. The implication (2) $\implies$ (3) is also trivial. Only (3) $\implies$ (1) remains to be shown.

    First, we may assume without of loss of generality that $k \geq 0$, since the group of units of a ring is closed under multiplication by $-1$.

    Now suppose $k \in T(G/e)^\times$. Since $\nm_e^G$ is multiplicative, we have $\nm_e^G(k) \in T(G/G)^\times$. The unique map $\varphi : \uA \to T$ gives a homomorphism $\varphi_G : A(G) \to T(G/G)$ such that
    \[\varphi_G(\nm_e^G(k)) = \nm_e^G(\varphi_e(k)) = \nm_e^G(k)\]
    is a unit in $T(G/G)$. Thus, we have a homomorphism $A(G)[1/\nm_e^G(k)] \to T(G/G),$
    and it suffices to show that $k \in A(G)[1/\nm_e^G(k)]^\times$.

    Suppose for contradiction that $k \notin A(G)[1/\nm_e^G(k)]^\times$. Then there is a maximal ideal $\mathfrak{m}$ of $A(G)[1/\nm_e^G(k)]$ such that $k \in \mathfrak{m}$. Such an ideal necessarily has the form \[\mathfrak{m} = \left\{\frac{x}{\nm_e^G(k)^s} : s \in \mathbb{N}, x \in \mathfrak{p}\right\}\] for some prime ideal $\mathfrak{p} \in \operatorname{Spec} A(G)$ such that $\nm_e^G(k) \notin \mathfrak{p}$.

    By \Cref{prop:Dress}, we have $\mathfrak{p} = \ker(A(G) \xrightarrow{\chi^H} \Z \to \Z/\mathfrak{q})$ for some subgroup $H \leq G$ and some prime ideal $\mathfrak{q} \in \operatorname{Spec}(\Z)$. Now $k \in \mathfrak{m}$ implies
    \[k \nm_e^G(k)^s = x \nm_e^G(k)^t\]
    in $A(G)$ for some $x \in \mathfrak{p}$ and $s,t \in \mathbb{N}$. Reducing modulo $\mathfrak{p}$, we have
    \[k \nm_e^G(k)^s = 0\]
    in $A(G)/\mathfrak{p}$. Since $\nm_e^G(k) \notin \mathfrak{p}$ and $A(G)/\mathfrak{p}$ is a domain, this implies $k \in \mathfrak{p}$. In other words,
    \[k = \chi^H(k) \in \mathfrak{q}.\]
    However, this yields (by \Cref{lem:marks of norm})
    \[\chi^H(\nm_e^G(k)) = k^{[G:H]} \in \mathfrak{q},\]
    so $\nm_e^G(k) \in \mathfrak{p}$, a contradiction.
\end{proof}

\begin{cor}\label{cor:main}
Let $G$ be a finite group. Let $T$ be a $G$-Tambara functor, let $H$ be a subgroup of $G$, and let $k$ be an integer. There are canonical isomorphisms $T[1/k_e] \to T[1/k_H] \to T[1/k_G]$, and thus one may speak unambiguously of the localization $T[1/k]$.
\end{cor}

This result simplifies many arguments in equivariant algebra. To conclude our discussion, we give one example of such an argument. For any inclusion of finite groups $H \leq G$, there is a forgetful functor $R_H^G : \Tamb_G \to \Tamb_H$ (called \emph{restriction}), given levelwise on objects by $(R_H^G T)(H/K) = T(G/K)$. This functor has a left adjoint $N_H^G : \Tamb_H \to \Tamb_G$ (called \emph{norm})~\cite{HillMazur,Hoyer}, which is the algebraic analogue of the Hill-Hopkins-Ravenel norm~\cite{HHR}. The norm functor $N_H^G$ can be computed by a coend
\[(N_H^G T)(G/K) = \int^{X \in \mathsf{set}_H} \mathcal{A}_G(\mathsf{set}_H(G,X), G/K) \times T(X),\]
which we display only to persuade the reader that dealing with $N_H^G$ this way is typically painful -- indeed, this coend formula describes only the underlying set of $(N_H^G T)(G/K)$; defining the ring operations on this set is even more tedious. However, one special case is easy to compute: by a straightforward abstract argument with universal properties, one finds that $(N_H^G T)(G/e) \cong \bigotimes_{[G : H]} T(H/e)$. A more general fact is recorded by Hoyer~\cite[Theorem 2.5.1]{Hoyer}. We immediately obtain:

\begin{cor}
    Let $H \leq G$ be an inclusion of groups and let $k$ be an integer. Let $T$ be an $H$-Tambara functor in which $k$ is invertible at some level. Then $k$ is invertible in $N_H^G(T)$ at all levels. More generally, $N_H^G$ commutes with inverting $k$, i.e. there is a natural isomorphism $N_H^G(T[1/k]) \cong N_H^G(T)[1/k]$ (with no assumption on $T$).
\end{cor}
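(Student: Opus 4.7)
The plan is to handle the two claims in order, using the main theorem as a black box together with the one nontrivial formula $(N_H^G T)(G/e) \cong \bigotimes_{[G : H]} T(H/e)$ recorded in the preceding paragraph.

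First I would suppose that $k$ is invertible at some level of the $H$-Tambara functor $T$. By the main theorem applied in $\Tamb_H$, $k$ is then a unit in $T(H/e)$. The tensor product of finitely many commutative rings in each of which $k$ is a unit is again a ring in which $k$ is a unit, so $k$ is a unit in $(N_H^G T)(G/e) \cong \bigotimes_{[G : H]} T(H/e)$. A second application of the main theorem, now to the $G$-Tambara functor $N_H^G(T)$, then yields that $k$ is a unit at every level of $N_H^G(T)$.

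For the natural isomorphism, my plan is to exhibit $N_H^G(T[1/k])$ and $N_H^G(T)[1/k]$ as representing the same functor on $\Tamb_G$, by chaining the universal property of localization with the adjunction $N_H^G \dashv R_H^G$. For any $G$-Tambara functor $S$, a map $N_H^G(T[1/k]) \to S$ corresponds by adjunction to a map $T[1/k] \to R_H^G(S)$, which by the universal property of localization is the same as a map $T \to R_H^G(S)$ in which $k$ is a unit at some level of $R_H^G(S)$. On the other hand, a map $N_H^G(T)[1/k] \to S$ corresponds to a map $N_H^G(T) \to S$ in which $k$ is a unit in $S$, i.e.\ by adjunction to a map $T \to R_H^G(S)$ in which $k$ is a unit in $S$. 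Since the levels of $R_H^G(S)$ form a nonempty subset of the levels of $S$ (namely, those of the form $S(G/K)$ for $K \leq H$), two applications of the main theorem identify these two conditions on $S$; hence the two universal properties agree and the desired isomorphism follows from Yoneda, with naturality automatic.

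I do not expect any serious obstacle: the main theorem is doing all the heavy lifting. The only subtlety is to keep track of the interplay between the $N_H^G \dashv R_H^G$ adjunction and the fact, guaranteed by the main theorem, that ``inverting $k$'' is a well-defined operation on either $\Tamb_H$ or $\Tamb_G$ without reference to a chosen level — which is precisely why neither side of the claimed isomorphism requires a subscript clarifying which level we are localizing at.
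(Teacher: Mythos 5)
Your proof is correct and matches the paper's intended argument: the first claim follows from the main theorem applied twice around the identification $(N_H^G T)(G/e) \cong \bigotimes_{[G:H]} T(H/e)$, and the second from comparing universal properties via the adjunction $N_H^G \dashv R_H^G$, with the main theorem guaranteeing that ``$k$ is invertible'' is a level-independent condition on both $S$ and $R_H^G S$. The only remark worth adding is that your Yoneda argument for the natural isomorphism is purely formal and does not use the tensor-product formula at all, so the first claim is in fact a consequence of the second.
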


\appendix

\section{Supplementary Examples}

\begin{example}\label{example:by hand for G=C_2}
    Here we give an example of how one might prove a specific case of \Cref{main theorem} without input from commutative algebra.

    Fix a prime number $p$, and let $G = C_p$ be the cyclic group of order $p$. Then
    \[\uA \cong \begin{tikzcd}
    	{\frac{\Z[t]}{(t^2-pt)}} \\
    	\\
    	\Z
    	\arrow["{\res_e^{C_p}}"{description}, from=1-1, to=3-1]
    	\arrow["{\tr_e^{C_p}}", shift left=5, from=3-1, to=1-1]
    	\arrow["{\nm_e^{C_p}}"', shift right=5, from=3-1, to=1-1]
    \end{tikzcd}\]
    where
    \begin{align*}
        \nm_e^{C_p}(x) &= x + \frac{x^p-x}{p} t, \\
        \res_e^{C_p}(t) &= p, \\
        \tr_e^{C_p}(1) &= t.
    \end{align*}
    Now let $k$ be any integer. We wish to show that $k$ is a unit in $\uA[1/k_e](C_p/C_p)$, i.e. inverting $k \in \uA(C_p/e)$ also forces $k$ to become inverted in $\uA(C_p/C_p)$. Since $\nm_e^{C_p}$ sends units to units, we know that $\nm_e^{C_p}(k) = k + \frac{k^p-k}{p} t$ will be a unit in $\uA[1/k_e](C_p/C_p)$. If $p$ does not divide $k$, then $k$ divides $\frac{k^p - k}{p} = k \frac{k^{p-1}-1}{p}$. In this case, we have that $k(1 + \frac{k^{p-1}-1}{p}t)$ is a unit in $\uA[1/k_e](C_p/C_p)$, and thus $k \in \uA[1/k_e](C_p/C_p)^\times$. Otherwise, if $p$ divides $k$, we have that
    \begin{multline*}
        \nm_e^{C_p}(k)^2
        = \left(k + \frac{k}{p}(k^{p-1}-1)t\right)^2 \\
        = k^2 + 2 k \frac{k}{p} (k^{p-1}-1)t + p \left(\frac{k}{p}\right)^2 \left(k^{p-1}-1\right)^2 t \\
        = k \left(k + 2 \frac{k}{p} (k^{p-1}-1) t + \frac{k}{p} (k^{p-1}-1)^2 t\right)
    \end{multline*}
    is a unit in $\uA[1/k_e](C_p/C_p)$, and so again $k \in \uA[1/k_e](C_p/C_p)^\times$.
\end{example}

\begin{example}\label{example:zero localization from non-nilpotent element}
    Here we give an example of a Tambara functor $T$ and an element $x$ such that $x$ is not nilpotent and $T[1/x]$ is the zero Tambara functor.

    Fix a prime number $p$, and let $G = C_p$ be the cyclic group of order $p$. Then $\underline{A}(C_p/C_p) \cong A(C_p) \cong \Z[t]/(t^2-pt)$, where the element $t$ corresponds to the isomorphism class of the free orbit $[C_p/e]$, and $1$ corresponds to the isomorphism class of the fixed orbit $[C_p/C_p]$. The element $t-p$ is not nilpotent in this ring, because its image under $\chi^{C_p} : A(C_p) \to \mathbb{Z}$ is the nonzero integer $-p$. However, $\res_e^{C_p}(t-p) = 0$, so $\underline{A}[1/(t-p)](C_p/e)$ is the zero ring. Since $C_p$-Tambara functors always satisfy $\nm_e^{C_p}(0) = 0$ and $\nm_e^{C_p}(1) = 1$, we conclude that $\underline{A}[1/(t-p)](C_p/C_p)$ is also the zero ring, and thus $\underline{A}[1/(t-p)]$ is the zero Tambara functor.
\end{example}

\begin{example}\label{example:main theorem fails for Green functors}
    Here we give an example of the failure of \Cref{main theorem} for Green functors, demonstrating that the result relies on the existence of the norm maps in Tambara functors.

    Fix a prime number $p$, and let $G = C_p$ be the cyclic group of order $p$. Consider the Green functor
    \[T = \begin{tikzcd}[ampersand replacement=\&]
    	{\Z[t, t/p, t/p^2, \dots]/(t^2-pt)} \\
    	{\Z[1/p]}
    	\arrow["{\res_e^{C_p}}", shift left=2, from=1-1, to=2-1]
    	\arrow["{\tr_e^{C_p}}", shift left=2, from=2-1, to=1-1]
    \end{tikzcd}\]
    where the restriction and transfer maps are determined by
    \begin{align*}
        \res_e^{C_p}(t/p^n) &= p^{1-n}, \\
        \tr_e^{C_p}(1/p^n) &= t/p^n.
    \end{align*}
    There is a ring homomorphism $\varphi : T(C_p/C_p) \to \mathbb{Z}$ determined by $\varphi(t) = 0$; since $p \notin \mathbb{Z}^\times$ we have also that $p \notin T(C_p/C_p)^\times$. Thus, $T$ satisfies $p \in T(C_p/e)^\times$ and $p \notin T(C_p/C_p)^\times$. In particular, \Cref{main theorem} fails for Green functors.
\end{example}

\printbibliography

\end{document}